\newcommand{\bed}{\begin{displaymath}}
\newcommand{\eed}{\end{displaymath}}
\newcommand{\bea}{\bed\begin{array}{rl}}
\newcommand{\eea}{\end{array}\eed}
\newcommand{\barray}{\begin{array}{ll}}
\newcommand{\earray}{\end{array}}
\newtheorem{theorem}{Theorem}[section]
\newtheorem{lemma}[theorem]{Lemma}
\newtheorem{corollary}[theorem]{Corollary}
\newtheorem{definition}{Definition}[section]
\begin{document}

\title{ Stability in terms of two measures of solutions to stochastic partial differential delay equations with switching  }
\author{Shufen Zhao$^{a,b}$\thanks{Corresponding author. zsfzx1982@sina.com, 12b312003@hit.edu.cn. This work is supported by the NSF of P.R. China (No.11071050)}
,~~Minghui Song$^{a}$
\\$a$ Department of Mathematics, Harbin Institute of Technology, Harbin 150001, PR China\\
$b$ Department of  Mathematics, Zhaotong University, Zhaotong 657000, PR China}
\maketitle
\begin{abstract}
In this paper, the problem of stability in terms of two measures is considered for a class of stochastic partial differential delay equations with switching. Sufficient conditions for stability in terms of two measures are obtained based on the technique of constructing a proper approximating strong solution system and carrying out a limiting type of argument to pass on stability of strong solutions to mild ones obtained by Bao, Truman and Yuan [ J. Bao, A. Truman,C. Yuan, Stability in distribution of mild solutions to stochastic partial differential delay equations with jumps, Proc. R. Soc. A, 465, 2111-2134 (2009)]. In particular, the stochastic stability under the fixed-index sequence monotonicity condition and under the average dwell-time switching are considered.

{\bf Keywords}
stochastic partial differential equations;  stability in terms of two measures ;

 average dwell-time switching \\
{\bf Mathematics Subject Classfication} 93E03 ;  93E15.

\end{abstract}
\pagestyle{fancy}
\fancyhf{}
%\fancyhead[RE]{The performance of new grad tes}
\fancyhead[CO]{ Stability in terms of two measures of solutions to stochastic partial differential delay equations with switching }
\fancyhead[LE,RO]{\thepage}
\section{Introduction}
Over past decades, stability theory for solutions of stochastic differential equations have attracted more and more attention due to its welled formulation and analysis in mechanical, electrical, control engineering and physical sciences (see,~e.g.,~\cite{01,02,03,04,gn,4,luj,I,song} and the references therein ). Especially, several authors have obtained conditions for stability in terms of two measures of some stochastic differential equations, such as Chatterjee and Liberzon in \cite{s} presented a general framework for analyzing stability of deterministic and stochastic switched systems via a comparison principle and multiple Lyapunov functions. Yuan \cite{shigui} investigated the stability in terms of two measures for stochastic differential equations with Markovian switching by using the method of Lyapunov functions. Yao and Deng in \cite{y1,y2} discussed the stability in terms of two measures for impulsive stochastic functional differential systems via comparison approach. For the deterministic systems, there have been many researches about the stability in terms of two measures, Liu and Wang \cite{jia1} considered the stability in terms of two measures for impulsive systems of functional differential equations, Ahmad \cite{jia2} obtained sufficient conditions for the stability in terms of two measures for perturbed impulsive delay integro-differential equations, Aleksandr and Slyn'ko \cite{jia3} discussed the stability in terms of two measures for a class of semilinear impulsive parabolic equations. However, questions about the stability of solutions for stochastic partial differential equations are less well understood.

In general, the existing results about the solution for the stochastic system are necessary, and some of the previous work were under the assumption that
there have at least one solution for the system (see,~e.g.,~\cite{y1,y2}). Actually, there are many nonlinear stochastic partial differential equations which do not satisfy the assumption evidently. In \cite{1}, Bao, Truman and Yuan considered some sufficient conditions for the stability in distribution of mild solution to stochastic partial delay equations with jumps by the technique developed in \cite{1} which based on the technique of constructing a proper approximating strong solution system and carrying out a limiting type of argument to pass on stability of strong solutions to mild ones. A family of continuous-time systems, together with a switching signal that choose an active subsystem from the family at every instant of time constitute a switched system \cite{sw}, which have been observed in many practical systems \cite{gn}.
Motivated by the above discussion, our aim is to consider the stability in terms of two measures for the solution of a class of stochastic partial delay equations with switching. We get sufficient conditions for stability in terms of two measures. The results improve and generalize those in earlier publications.

 We first introduce some preliminaries such as the conditions that sufficient for the existence of the solution in Section 2.  Section 3 devotes
to the main stability results, followed by the results under fixed-index sequence monotonicity condition in Section 4.
Stochastic stability under dwell-time switching is considered in Section 5. At last, we give some concluding remarks in Section 6.
\section{Preliminaries}
Let $(\Omega,\mathcal{F},\mathbf{P})$ be a complete probability space equipped with some filtration $\{\mathcal{F}_{t}\}_{t\geq0}$ satisfying the usual conditions , i.e., the filtration is right continuous and $\mathcal{F}_{0}$ contains all $\mathbf{P}$-null sets. Let $H$, $K$ be two real separable Hilbert spaces and denote by $<\cdot,\cdot>_{H}$,  $<\cdot,\cdot>_{K}$ their inner products and by $\|\cdot\|_{H}$, $\|\cdot\|_{K}$ their vector norms, respectively. Let $\mathcal{L}(K,H)$ be the set of all inner bounded operators from $K$ into $H$, equipped with the usual operator norm $\|\cdot\|$. Let $\tau>0$ and $D([-\tau,0];H)$ denote the family of all right continuous functions with left hand limits $\varphi$ from $[-\tau,0]$ to $H$ equipped with the norm $\|\varphi\|_{D}:=\sup_{-\tau\leq\theta\leq 0}\|\varphi\|_{H}$. We use $D^{b}_{\mathcal{F}_{0}}([-\tau,0];H)$ to denote the family of all almost surely bounded, $\mathcal{F}_{0}$ measurable, $D([-\tau,0];H)$ valued random variables.

%Let $\{r(t), t\in \mathbf{R}_{+}\}$, $\mathbf{R}_{+}=[0,\infty)$, be a right continuous Markov chain on the probability space $(\Omega,\mathcal{F},\mathbf{P})$ taking values in a finite state space $S=\{1,2,\ldots,N\}$ with generator $\Gamma=(\gamma_{ij})_{N\times N}$ given by
%\begin{equation*}
% \mathbf{P}\{r(t+h)=j|r(t)=i\}=\left\{\begin{array}{ll}
% \gamma_{ij}h+o(h),& \textrm{if} i\neq j \\
%1+\gamma_{ij}h+o(h), & \textrm{if} i= j ,\end{array} \right.
%\end{equation*}
%for any $t\geq 0$ and small $h>0$. Here $\gamma_{ij}\geq 0$ is the rate of transition from $i$ to $j$ if $i\neq j$, while $\gamma_{ii}=-\sum_{i\neq j}\gamma_{ij}$.
Let $\{W(t),t\geq 0\}$ denote a $K$-valued $\{\mathcal{F}_{t}\}_{t\geq0}$ adapted Wiener process defined on the probability space $(\Omega,\mathcal{F},\mathbf{P})$ with covariance operator $Q$, i.e.
\begin{equation*}
    \mathbf{E}<W(t),x>_{K}<W(t),y>_{K}=(t\wedge s)<Qx,y>_{K},\:\forall x,y\in K,
\end{equation*}
where $Q$ is a positive, self-adjoint, trace class operator on $K$. In particular we call such $\{W(t),t\geq 0\}$ a $K$ valued $Q$ wiener process relative to $\{\mathcal{F}_{t}\}_{t\geq0}$ just as presented in \cite{1}, $W(t)$ is defined by
 $W(t)=\sum_{n=1}^{\infty}\sqrt{\lambda_{n}}\beta_{n}(t)e_{n},\,t\geq0,$
where $\{\beta_{n}(t)\}_{n\in \mathbf{N}}$ is a sequence of real valued standard Brownian motions mutually independent on the probability space $(\Omega,\mathcal{F},\mathbf{P})$, $\lambda_{n},~n\in \mathbf{N}$ are the eigenvalues of $Q$ and $e_{n},~n\in \mathbf{N},$ are the corresponding eigenvectors. That is
\begin{equation*}
    Qe_{n}=\lambda_{n}e_{n},\,n=1,2,3,\ldots.
\end{equation*}
We introduce the subspace $K_{0}=Q^{1/2}(K)$ of $K$, which endowed with the inner product
   $ <u,v>_{K_{0}}=<Q^{1/2}u,Q^{1/2}v>_{K}$
is a Hilbert space. Let $\mathcal{L}_{2}^{0}=\mathcal{L}_{2}(K_{0},H)$ denote the space of all Hilbert Schmidt operators from $K_{0}$ into $H$. It turns to be a separable Hilbert space, equipped with the norm
\begin{equation*}
\|\psi\|^{2}_{\mathcal{L}_{2}^{0}}=tr\big((\psi Q^{1/2})(\psi Q^{1/2})^{*}\big)\;\textrm{for any }\;\psi\in \mathcal{L}_{2}^{0}.
\end{equation*}
Clearly, for any bounded operators $\psi\in \mathcal{L}(K,H)$, this norm reduces to $\|\psi\|^{2}_{\mathcal{L}_{2}^{0}}=tr(\psi Q\psi^{*})$.
Let $\Phi:(0,\infty)\rightarrow\mathcal{L}_{2}^{0}$ be predictable, $\mathcal{F}_{t}$ adapted process such that
\begin{equation*}
\int_{0}^{t}\mathbf{E}\|\Phi(s)\|^{2}_{\mathcal{L}_{2}^{0}}\mathrm{d}s<\infty,\, \forall t>0.
\end{equation*}
Then we can define the $H$ valued stochastic integral
\begin{equation*}
\int_{0}^{t}\Phi(s)\mathrm{d}W(s),
\end{equation*}
which is a continuous square-integrable martingale \cite{2}.
Let $P=P(t)$, $t\in D_{P}$ be a stationary $\mathcal{F}_{t}$ Poisson point process with characteristic measure $\lambda$ (see,\cite{situ,3}). Denote by $N(\mathrm{d}t,\mathrm{d}u)$ the Poisson counting measure associated with $P$, i.e.
\begin{equation*}
N(t,\mathbf{Z})=\sum_{s\in D_{P},s\leq t}I_{\mathbf{Z}}(P(s))
\end{equation*}
 with measurable set $\mathbf{Z}\in \mathcal{B}(K-\{0\})$, which denotes the Borel $\sigma$ field of $K-\{0\}$. Let $\tilde{N}(\mathrm{d}t,\mathrm{d}u):=N(\mathrm{d}t,\mathrm{d}u)-\mathrm{d}t\lambda(\mathrm{d}u)$ be the compensated  Poisson measure that is independent of $W(t)$.
%Denote by $\mathcal{P}^{2}([0,T]\times\mathbf{Z};H)$ the space of all predictable mappings $L:[0,T]\times\mathbf{Z}\rightarrow H$ for which
%\begin{equation*}
%\int_{0}^{T}\int_{\mathbf{Z}}\mathbf{E}\|L(t,u)\|_{H}^{2}\mathrm{d}t\lambda(\mathrm{d}u)<\infty.
%\end{equation*}
%We may then define the $H$ valued stochastic integral
%\begin{equation*}
%\int_{0}^{T}\int_{\mathbf{Z}}L(t,u)\tilde{N}(\mathrm{d}t,\mathrm{d}u)<\infty,
%\end{equation*}
%which is a centered square integrable martingale \cite{3}.
In the following, we investigate the stability in terms of two measures of the following stochastic partial differential equations with jumps and switching in the following form: let $\mathcal{S}$ is an index set, for given $\tau>0$ and arbitrary $t\geq 0,$
\begin{eqnarray}\label{eq1}
\mathrm{d}X(t)&=&[AX(t)+F_{\sigma}(X(t),X(t-\tau))\mathrm{d}t+G_{\sigma}(X(t),X(t-\tau))\mathrm{d}W(t)\\
\nonumber&&+\int_{\mathbf{Z}}L_{\sigma}(X(t),X(t-\tau),u)\tilde{N}(\mathrm{d}t,\mathrm{d}u),
\end{eqnarray}
with initial datum $X(t)=\xi(t)\in D^{b}_{\mathcal{F}_{0}}([-\tau,0];H)$, $-\tau\leq t\leq 0,$ $\sigma :\mathbf{R}_{+}\rightarrow \mathcal{S}$ is a piecewise constant function, which specifies at every time $t,$ the index $\sigma(t)=p\in \mathcal{S}$ and $\sigma(0)=p_{0}.$ As in \cite{1}, the following assumptions are imposed for the existence and uniqueness of the mild solution to (\ref{eq1}).
\begin{enumerate}
  \item [(H1)] $A$, generally unbounded, is the infinitesimal generator of a $C_{0}$ semigroup $T(t)$, $t\geq 0$, of contraction.
  \item [(H2)] The mapping $F_{p}:H\times H\rightarrow H$, $G_{p}:H\times H\rightarrow\mathcal{L}_{2}^{0}$ and $L_{p}:H\times H\times \mathbf{Z}\rightarrow H$ ( $\forall\,p\in \mathcal{S}$) are Borel measurable and satisfy the following Lipschitz continuity condition and linear growth condition for some constant $k>0$ and arbitrary $x,y,x_{1}, x_{2},y_{1}, y_{2}\in H$,
     \begin{eqnarray*}
      && \|F_{p}(x_{1},y_{1})-F_{p}(x_{2},y_{2})\|_{H}^{2}+\|G_{p}(x_{1},y_{1})-G_{p}(x_{2},y_{2})\|_{\mathcal{L}_{2}^{0}}\\
       &&\quad+\int_{\mathbf{Z}}\|L_{p}(x_{1},y_{1},u)-L_{p}(x_{2},y_{2},u)\|_{H}^{2}\lambda(\mathrm{d}u)\\
       &&\leq k(\|x_{1}-x_{2}\|_{H}^{2}+\|y_{1}-y_{2}\|_{H}^{2})
     \end{eqnarray*}
      and  \begin{eqnarray*}
         &&\|F_{p}(x,y)\|_{H}^{2}+ \|G_{p}(x,y)\|_{\mathcal{L}_{2}^{0}}^{2}+\int_{\mathbf{Z}}\|L_{p}(x,y,u)\|_{H}^{2}\lambda(\mathrm{d}u)\\
        && \leq k(1+\|x\|_{H}^{2}+\|y\|_{H}^{2}).
           \end{eqnarray*}
    \item [(H3)] There exists a number $L_{0}>0$ such that for arbitrary $x,y,x_{1}, x_{2},y_{1}, y_{2}\in H,$
    \begin{eqnarray*}
    &&\int_{\mathbf{Z}}\|L_{p}(x_{1},y_{1},u)-L_{p}(x_{2},y_{2},u)\|_{H}^{4}\lambda(\mathrm{d}u)\leq L_{0}(\|x_{1}-x_{2}\|_{H}^{4}+\|y_{1}-y_{2}\|_{H}^{4}),
    \end{eqnarray*}
    \begin{eqnarray*}&&\int_{\mathbf{Z}}\|L_{p}(x,y,u)\|_{H}^{4}\lambda(\mathrm{d}u)\leq L_{0}(1+\|x\|_{H}^{4}+\|y\|_{H}^{4}).\end{eqnarray*}
\end{enumerate}
For all $t\geq0,$ $X_{t}=\{X(t+\theta):-\tau\leq\theta\leq0\}$ is regarded as a $D([-\tau,0];H)$-valued stochastic process.
\begin{definition}
An $H$-valued stochastic process $\{X(t),t\in[-\tau,$ $T]\},$ $0\leq T<\infty$ is called a strong solution of equation (\ref{eq1}) if
\begin{enumerate}
  \item [(i)] For any $t\in[0,T],$ $X_{t}\in D([-\tau,0];H)$ is adapted to $\mathcal{F}_{t}.$
  \item [(ii)] $X(t)\in H$ has c\`{a}dl\`{a}g path on $t\in[0,T]$ almost surely, $X(t)\in D(A)$ on $[0,T]\times\Omega$ with $\int_{0}^{T}\|AX(t)\|_{H}\mathrm{d}t<\infty$ almost surely and for all $t\in [0,T],$
 \begin{eqnarray*}
 X(t)&=&\xi(0)+\int_{0}^{t}[AX(s)+F_{\sigma(s)}(X(s),X(s-\tau))\mathrm{d}s\\&&+\int_{0}^{t}G_{\sigma(s)}(X(s),X(s-\tau))\mathrm{d}W(s)\\
 &&+\int_{0}^{t}\int_{\mathbf{Z}}L_{\sigma(s)}(X(s),X(s-\tau),u)\tilde{N}(\mathrm{d}s,\mathrm{d}u),
  \end{eqnarray*}
  for any $X_{0}(\cdot)=\xi(\cdot)\in D_{\mathcal{F}_{0}}^{b}([-\tau,0];H)$, $-\tau\leq t\leq0$.
\end{enumerate}
\end{definition}
\begin{definition}
An $H$-valued stochastic process $\{X(t),t\in [-\tau,$ $T]\}$, $0\leq T<\infty$ is called a mild solution of equation (\ref{eq1}) if
\begin{enumerate}
  \item [(i)]  For any $t\in[0,T],$ $X_{t}(\cdot)\in D([-\tau,0];H)$ is adapted to $\mathcal{F}_{t}.$
  \item [(ii)] $X(t)\in H$ has c\`{a}dl\`{a}g path on $t\geq 0$ almost surely, and for arbitrary $t\in[0,T]$,
%$X_{t}(0)\in D(A)$ on $[0,T]\times\Omega$ with
%\begin{eqnarray*}
% \int_{0}^{T}\|AX(t)\|_{H}\mathrm{d}t<\infty,\;a.s.,
%   \end{eqnarray*}
%and for all $t\in [0,T]$
  \begin{eqnarray*}
 X(t)&=&T(t)\xi(0)+\int_{0}^{t}T(t-s)F_{\sigma(s)}(X(s),X(s-\tau))\mathrm{d}s\\&&+\int_{0}^{t}T(t-s)G_{\sigma(s)}(X(s),X(s-\tau))\mathrm{d}W(s)\\
 &&+\int_{0}^{t}\int_{\mathbf{Z}}T(t-s)L_{\sigma(s)}(X(s),X(s-\tau),u)\tilde{N}(\mathrm{d}s,\mathrm{d}u),
  \end{eqnarray*}
  for any $X_{0}(\cdot)=\xi(\cdot)\in D_{\mathcal{F}_{0}}^{b}([-\tau,0];H)$, $-\tau\leq t\leq0$.
\end{enumerate}
\end{definition}
 The following two conclusions appeared in \cite{1} which given theoretical basis for the stability analysis.
 \begin{lemma}(\cite{1})
 Under the assumptions (H1), (H2) and (H3), system (\ref{eq1}) admits a unique mild solution.
 \end{lemma}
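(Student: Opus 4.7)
The plan is to reduce (\ref{eq1}) to the non-switched setting of \cite{1} on each interval where $\sigma$ is constant, and then concatenate. Since $\sigma:\mathbf{R}_{+}\rightarrow\mathcal{S}$ is piecewise constant, let $0=t_{0}<t_{1}<t_{2}<\cdots$ denote its switching instants, which we assume (as is standard) have no finite accumulation point. On each interval $[t_{k},t_{k+1})$ one has $\sigma(t)\equiv p_{k}\in\mathcal{S}$, and (\ref{eq1}) reduces to an SPDE with delay and jumps whose coefficients $F_{p_{k}}, G_{p_{k}}, L_{p_{k}}$ satisfy (H1)--(H3) in the form treated by the existence and uniqueness theorem of \cite{1}. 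Hence existence and uniqueness of a mild solution on any single mode is a direct quotation of that reference.

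I would then carry out an inductive concatenation. On $[0,t_{1})$ take the given $\xi\in D_{\mathcal{F}_{0}}^{b}([-\tau,0];H)$ and apply \cite{1} to obtain the unique mild solution $X^{(0)}$; since $X^{(0)}$ is c\`adl\`ag, the left limit $X^{(0)}(t_{1}^{-})$ exists almost surely and the segment $\xi^{(1)}(\theta):=X^{(0)}(t_{1}+\theta)$, $\theta\in[-\tau,0]$, is a.s.\ bounded and $\mathcal{F}_{t_{1}}$-measurable. On $[t_{1},t_{2})$ the equation has fixed coefficients $F_{p_{1}},G_{p_{1}},L_{p_{1}}$ and initial datum $\xi^{(1)}$, so a second invocation of \cite{1} (on the time-shifted filtration $\{\mathcal{F}_{t_{1}+t}\}_{t\geq 0}$) yields a unique $X^{(1)}$ on $[t_{1}-\tau,t_{2})$. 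Pasting $X^{(0)}$ and $X^{(1)}$ and iterating produces a process $X$ on $[-\tau,\infty)$ which, by uniqueness on each piece, is the unique mild solution of (\ref{eq1}).

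If the single-mode statement had to be established from scratch rather than cited, the standard route would be a Banach fixed-point argument in the space $\mathcal{H}^{2}([-\tau,T];H)$ of $\mathcal{F}_{t}$-adapted c\`adl\`ag processes with finite second moment, applied to the map
\begin{equation*}
(\Psi Y)(t)=T(t)\xi(0)+\int_{0}^{t}T(t-s)F_{p}(Y(s),Y(s-\tau))\mathrm{d}s+\int_{0}^{t}T(t-s)G_{p}(Y(s),Y(s-\tau))\mathrm{d}W(s)+\int_{0}^{t}\int_{\mathbf{Z}}T(t-s)L_{p}(Y(s),Y(s-\tau),u)\tilde{N}(\mathrm{d}s,\mathrm{d}u).
\end{equation*}
The contraction $\|T(t)\|\le 1$, the It\^o isometry for the $Q$-Wiener integral, the $L^{2}$ isometry for the compensated Poisson integral which bounds $\mathbf{E}\|\cdot\|_{H}^{2}$ by $\int_{0}^{t}\int_{\mathbf{Z}}\mathbf{E}\|L_{p}(\cdots)\|_{H}^{2}\lambda(\mathrm{d}u)\mathrm{d}s$, the Lipschitz bound of (H2), and Gronwall's inequality together show $\Psi$ is a contraction for small $T$; iteration gives a global solution on $[0,t_{1})$. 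The main obstacle is handling the delay term $Y(s-\tau)$ jointly with the jump integral uniformly across the switching instants; this is resolved precisely because each $X^{(k)}$ is c\`adl\`ag so that $\xi^{(k+1)}$ is a bona fide element of $D_{\mathcal{F}_{t_{k+1}}}^{b}([-\tau,0];H)$, and the absence of finite accumulation of switches prevents escape to infinity in finite time. Hypothesis (H3) plays no role in the existence argument itself but is needed later for the fourth-moment estimates driving the stability analysis.
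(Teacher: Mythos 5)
The paper does not prove this lemma at all: it is stated as a quotation of the existence--uniqueness theorem of \cite{1}, which treats the non-switched equation, and the passage to the switched system (\ref{eq1}) is left implicit. Your mode-by-mode concatenation is precisely the missing bridge, and it is sound: on each interval of constancy of $\sigma$ the coefficients are a fixed triple $(F_{p},G_{p},L_{p})$ satisfying (H1)--(H3), so the cited theorem applies, and restarting from the c\`adl\`ag segment at each switching instant is legitimate provided (as you correctly flag) the switching instants do not accumulate in finite time --- a standing assumption for piecewise constant switching signals that the paper never states but clearly intends. Two small points you should make explicit. First, $X^{(0)}$ must be extended to the closed endpoint $t_{1}$ via the variation-of-constants formula before you can form $\xi^{(1)}(\theta)=X^{(0)}(t_{1}+\theta)$ at $\theta=0$; this is harmless but is where right-continuity at $t_{1}$ is actually fixed. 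Second, the only nontrivial content of the pasting step is that the concatenated process satisfies the \emph{global} mild formulation on $[0,t_{2})$ with the original datum $\xi$, not merely the restarted one on $[t_{1},t_{2})$; this follows from the semigroup identity $T(t-s)=T(t-t_{1})T(t_{1}-s)$ applied to each integral term (the flow property of mild solutions), and deserves a line of verification. Your closing remark on (H3) is consistent with its role in the paper: it is used for the Yosida approximation and the strong-to-mild limiting argument (Lemma 2.4 and equation (\ref{eq22})), not for the basic $L^{2}$ fixed-point existence proof.
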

\begin{lemma}(\cite{1})
  Let $\xi\in D^{b}_{\mathcal{F}_{0}}([-\tau,0];H)$ be an arbitrarily given initial datum and assume that conditions (H1), (H2) and (H3) hold, then (\ref{eq22}) has a unique strong solution $X^{n}(t)\in D(A)$, which lies in $C([0,T];L^{2}(\Omega,\mathcal{F},\mathbf{P};H))$ for all $T>0$. Moreover, $X^{n}(t)$ converges to the mild solution $X(t)$ of system (\ref{eq1}) almost surely in $C([0,T];L^{2}(\Omega,\mathcal{F},\mathbf{P};H))$ as $n\rightarrow \infty.$
 \end{lemma}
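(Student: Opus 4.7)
The plan is to adapt the Yosida-regularization strategy of Bao--Truman--Yuan \cite{1} to the switching framework. Introduce, for $n$ in the resolvent set of $A$, the bounded operator $R(n):=n(nI-A)^{-1}$. By (H1) we have $\|R(n)\|_{\mathcal L(H)}\le 1$, $R(n)H\subset D(A)$, and $R(n)x\to x$ in $H$ for every $x\in H$. Equation (\ref{eq22}) should then be the system obtained by applying $R(n)$ to the initial datum and to each of the coefficients $F_\sigma,G_\sigma,L_\sigma$ (equivalently, by replacing $A$ with its Yosida approximation $A_n=nAR(n,A)$), so that every integrand takes values in $D(A)$.

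For existence and uniqueness of the strong solution $X^n$, the regularized coefficients inherit the Lipschitz continuity and linear growth of (H2)--(H3) with constants uniform in $n$ because $\|R(n)\|\le 1$. Since $\sigma$ is piecewise constant, I would restrict to any interval $[t_i,t_{i+1}]$ on which $\sigma\equiv p_i$; here the setting is exactly that of \cite{1}, and a standard Picard iteration in $C([t_i,t_{i+1}];L^2(\Omega;H))$, using Burkholder--Davis--Gundy for the Wiener integral and the Kunita $L^2$-estimate for the compensated Poisson integral, produces a unique strong solution with values in $D(A)$. Using $X^n(t_{i+1})$ as the initial condition on the next constancy interval and concatenating yields a global $X^n\in C([0,T];L^2(\Omega;H))$.

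For the convergence $X^n\to X$, I would subtract the mild representation of $X$ from the variation-of-constants form of $X^n$, which is legitimate since $X^n$ is strong. The error splits into a deterministic term $[R(n)-I]T(t)\xi(0)$, three semigroup-convolution defect integrals of the form $\int_0^t T(t-s)[R(n)-I]\Psi_{\sigma(s)}(X(s),X(s-\tau))$ against the appropriate integrator ($\mathrm{d}s$, $\mathrm{d}W(s)$, or $\tilde N(\mathrm{d}s,\mathrm{d}u)$) for $\Psi\in\{F,G,L\}$, plus Lipschitz error integrals controlled by $\|X^n(s)-X(s)\|_H^2$. Jensen for the drift, BDG for the Wiener part, and the Kunita estimate combined with (H3) for the jump part would drive the first four defect terms to zero in $L^2(\Omega;C([0,T];H))$ via the strong convergence $R(n)\to I$ and dominated convergence. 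A Gronwall argument then yields $\sup_{t\in[0,T]}\mathbf{E}\|X^n(t)-X(t)\|_H^2\to 0$, and almost-sure uniform convergence along a subsequence follows by Chebyshev and Borel--Cantelli; monotonicity of the Yosida scheme upgrades this to the full sequence.

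The principal obstacle is controlling the compensated-Poisson defect in the presence of both switching and unbounded $A$; the fourth-moment hypothesis (H3) is essential in order to apply a BDG-type estimate to $[R(n)-I]L_{\sigma(s)}$, and the uniformity of the Lipschitz and growth bounds in $p\in\mathcal S$ is what lets the dominated-convergence step be executed independently of the random trajectory of $\sigma$. Because $\sigma$ is merely piecewise constant with no accumulation of switches on $[0,T]$, no new issue arises beyond the unswitched case of \cite{1} once this uniformity is invoked explicitly.
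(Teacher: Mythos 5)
The paper does not prove this lemma at all: it is imported verbatim from Bao, Truman and Yuan \cite{1} (``The following two conclusions appeared in \cite{1}\dots''), so there is no in-paper argument to compare yours against. Your sketch is essentially a reconstruction of the proof in \cite{1} -- Yosida regularization $R(n)=nR(n,A)$ with $\|R(n)\|\le 1$ by contractivity, existence for the regularized system, then a variation-of-constants error decomposition with defect terms $[R(n)-I]\Psi$ killed by strong convergence plus dominated convergence, and a Gronwall closure -- extended to the switched setting by working interval-by-interval on the constancy intervals of $\sigma$. That is the right route, and the observation that the Lipschitz and growth constants in (H2)--(H3) are uniform in $p\in\mathcal S$ is exactly what makes the extension to switching cost nothing.

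Two soft spots are worth fixing. First, Picard iteration by itself does not ``produce a strong solution with values in $D(A)$''; it produces a mild solution of (\ref{eq22}). The reason that mild solution is strong is the standard criterion (Da Prato--Zabczyk / Ichikawa): the initial datum $R(n)\xi$ and all coefficients $R(n)F_p$, $R(n)G_p$, $R(n)L_p$ take values in $D(A)$ and $AR(n)=n(R(n)-I)$ is a \emph{bounded} operator, so $AR(n)F_p$ etc.\ again satisfy Lipschitz and growth conditions (with $n$-dependent constants, which is harmless for fixed $n$) and $\int_0^T\|AX^n(t)\|_H\,\mathrm{d}t<\infty$ can be verified. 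You should state this step; it is the whole point of the regularization. Second, the closing claim that ``monotonicity of the Yosida scheme'' upgrades subsequence convergence to convergence of the full sequence is unsupported -- there is no such monotonicity -- and also unnecessary: the Gronwall estimate already gives $\sup_{t\le T}\mathbf{E}\|X^n(t)-X(t)\|_H^2\to 0$ for the full sequence, which is precisely convergence in $C([0,T];L^2(\Omega,\mathcal F,\mathbf P;H))$ as the lemma asserts (the ``almost surely'' in the statement is an artifact of the paper's phrasing, not an extra pathwise claim you need to deliver). Relatedly, (H3) is not what controls the compensated-Poisson defect term in the $L^2$ estimate -- the $L^2$ isometry for $\tilde N$ together with (H2) suffices there; the fourth-moment condition (H3) enters in \cite{1} to obtain a c\`adl\`ag modification of the jump stochastic convolution, not in the Gronwall step.
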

 \begin{equation}\label{eq22}
    \left\{ \begin{array}{ll}
 \mathrm{d}X(t)&=AX(t)\mathrm{d}t+R(n)F_{\sigma}(X(t),X(t-\tau))\mathrm{d}t+R(n)G_{\sigma}(X(t),X(t-\tau))\mathrm{d}W(t)\\&\quad+\int_{\mathbf{Z}}R(n)L_{\sigma}(X(t),X(t-\tau),u)\tilde{N}(\mathrm{d}t,\mathrm{d}u), \\
X(t)&=R_{\sigma}(n)\xi(t)\in D(A),\quad -\tau\leq t\leq0,
\\\sigma(0)&=p_{0},
\end{array} \right.
\end{equation}
where $n\in \rho(A)$, the resolvent set of $A$ and $R(n)=nR(n,A)$, $R(n,A)$ is the resolvent of $A.$
Let $C^{2}(H\times\mathcal{S};\mathbf{R}_{+})$ denote the space of all real valued nonnegative functions $V$ with properties:
\begin{enumerate}
  \item[(i)] for any $p\in\mathcal{S},$ $V(x,p)$ is twice (Fr\'{e}chet) differentiable in $x,$
  \item[(ii)]for any $p\in\mathcal{S},$ $V_{x}(\cdot,p)$ and $V_{xx}(\cdot,p)$ are both continuous in $H$ and $\mathcal{L}(H)=\mathcal{L}(H,H)$, respectively.
\end{enumerate}
Suppose $V\in C^{2}(H\times\mathcal{S};\mathbf{R}_{+})$, let $X(t)$ be a strong solution of equation (\ref{eq1}), then with $t\geq 0$, according to the work in \cite{4,1}, we present the following It\^{o} formula.
\begin{eqnarray*}\small
V(X(t),\sigma)&=&V(\xi,p_{0})+\int_{0}^{t}\mathcal{L}V(X(s),X(s-\tau),\sigma)\mathrm{d}s\\&&~+\int_{0}^{t}<V_{x}(X(s)),G_{\sigma}(X(s),X(s-\tau))\mathrm{d}W(s)>_{H}\\
&&~+\int_{0}^{t}\int_{\mathbf{Z}}[V(X(s)+L_{\sigma}(X(s),X(s-\tau),u))-V(X(s))]\tilde{N}(\mathrm{d}s,\mathrm{d}u).
\end{eqnarray*}
For any $(\varphi,p)\in D([-\tau,0];H)\times\mathcal{S}$ with $\varphi(0)\in D(A)$ where $D(A)$ is the domain of operator $A$, we introduce the following:
\begin{eqnarray*}\small
&&\mathcal{L}V(\varphi,p)\\&&=<V_{x}, A\varphi(0)+F_{p}(\varphi(0),\varphi)>_{H}+\frac{1}{2}tr(V_{xx}(x)G_{p}(\varphi(0),\varphi)QG^{*}_{\sigma}(\varphi(0),\varphi))
\\&&\quad+\int_{\mathbf{Z}}[V(\varphi(0)+L_{\sigma}(\varphi(0),\varphi,u))-V(\varphi(0),p)
\\&&\quad\quad\quad
-<V_{x}(\varphi(0),p),L_{p}(\varphi(0),\varphi,u)>_{H}]\lambda(\mathrm{d}u).
\end{eqnarray*}
We say a function $\alpha\in C(\mathbf{R}_{+},\mathbf{R}_{+})$ is of class $\mathcal{K}$ if $\alpha$ is strictly increasing with $\alpha(0)=0$, is of class $\mathcal{K}_{\infty}$ if in addition $\alpha(r)\rightarrow\infty$ as $r\rightarrow\infty$; and we write $\alpha\in\mathcal{K}$ and $\alpha\in\mathcal{K}_{\infty}$ respectively. A function $\beta\in C(\mathbf{R}_{+}^{2},\mathbf{R}_{+})$ is said to be of class $\mathcal{KL}$ if $\beta(\cdot,t)$ is a function of class $\mathcal{K}$ for every fixed $t$ and $\beta(r,t)\rightarrow 0$ as $t\rightarrow\infty$ for every fixed $r$; and we write $\beta\in\mathcal{KL}.$ Let
\begin{equation*}
 \Gamma:=\{h\in C(H,\mathbf{R}_{+})|\inf_{x\in H}h(x)=0\}.
\end{equation*}
\begin{definition}\label{def1}
Let $h^{\circ}$, $h\in \Gamma$. System (\ref{eq1}) is said to be $(h^{\circ},h)$ globally asymptotically stable in the mean ($(h^{\circ},h)$-GAS-M) if there exists a function $\beta\in\mathcal{KL}$ such that for every $\xi(t)\in D^{b}_{\mathcal{F}_{0}}([-\tau,0];H)$, the inequality
\begin{equation}\label{ieq2}
    \mathbf{E}[h(X_{t})]\leq \beta(h^{\circ}(\xi),t),\quad \forall t\geq0,
\end{equation}
holds.
\end{definition}
\begin{definition}\label{def2}System (\ref{eq1}) is said to be $(h^{\circ},h)$ globally  asymptotically stable in probability ($(h^{\circ},h)$-GAS-P) if for every $\eta\in[0,1],$ there exists a function $\beta\in\mathcal{KL}$ such that for every $\xi(t)\in D^{b}_{\mathcal{F}_{0}}([-\tau,0];H)$, the inequality
\begin{equation}\label{eqq2}
  \mathbf{P}[h(X_{t})\geq \beta(h^{\circ}(\xi),t)]<\eta,\quad \forall t\geq0,
\end{equation}
holds.
\end{definition}
\section{Main results }
In the previous section, we showed the relation between the strong solution of (\ref{eq22}) and the mild solution of (\ref{eq1}). However, in order to get our main results about the two measures stability of (\ref{eq1}), we also need the following lemma.

 Let $V\in C^{2}(H;\mathbf{R}_{+})$. The function $V$ is said to be $h$-positive definite if there exists a function $\alpha_{1}\in \mathcal{K}_{\infty}$ such that for any $\varphi\in D,$ $\alpha_{1}\circ h(\varphi)\leq V(\varphi(0))$, and $h^{\circ}$-decrescent if there exists a function $\alpha_{2}\in \mathcal{K}_{\infty}$ such that $V(\varphi(0))\leq \alpha_{2}\circ h^{\circ}(\varphi).$
\begin{lemma}\label{lem1}(\cite{8})
Suppose $\phi(u,\psi):\mathbf{R}_{+}\times D([-\tau,0];\mathbf{R}_{+})\rightarrow\mathbf{R} $ is a continuous function which is non-decreasing with respect to $\psi\in D([-\tau,0];\mathbf{R}_{+})$, i.e. if $\forall$ $\psi_{1},\psi_{2}\in D([-\tau,0];\mathbf{R}_{+})$ with $\psi_{1}(\theta)\leq\psi_{2}(\theta),$ $\theta\in[-\tau,0]$, we have $\phi(u,\psi_{1}(\theta))\leq \phi(u,\psi_{2}(\theta))$ for each $u\in \mathbf{R}_{+}$. Then, for arbitrary given initial function $\psi\in D([-\tau,0];\mathbf{R}_{+})$, there exists some $T$ such that the following equation
%\begin{equation}\label{ceq}
%   \Sigma: \dot{u}(t)=\phi(u(t),u_{t}),\qquad u_{0}=\psi,\, t\in [0,T],
%\end{equation}
 \begin{equation}\label{ceq}
    \Sigma: \left\{ \begin{array}{ll}
 \dot{u}(t)=\phi(u(t),u_{t}),\, t\in [0,T],\\ \qquad u_{0}=\psi,
\end{array} \right.
\end{equation}
admits a unique maximal solution, denote by $\bar{u}(t,\psi)$, defined on $[0, T]$.
\end{lemma}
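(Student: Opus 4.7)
The plan is to obtain $\bar u$ as the limit of a monotone decreasing family of solutions to well-posed approximating problems. Specifically, I would construct a sequence $\phi_n: \mathbf{R}_+ \times D([-\tau,0];\mathbf{R}_+) \to \mathbf{R}$ satisfying (i) global Lipschitz continuity in both arguments, (ii) the same non-decreasing dependence on $\psi$ as $\phi$, (iii) a strict pointwise ordering $\phi_n > \phi_{n+1} \geq \phi + \tfrac{1}{n+1}$, and (iv) $\phi_n \to \phi$ uniformly on bounded sets. A convenient realization is to mollify $\phi$ in the scalar variable $u$ against a smooth positive kernel and to replace the $\psi$-argument by a monotone finite-dimensional projection before convolution, then add a positive null sequence; monotonicity in $\psi$ is preserved under such regularizations because pointwise order in $D([-\tau,0];\mathbf{R}_+)$ is preserved by both projection onto nodal values and averaging against positive kernels.

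For each $n$, standard Picard iteration for retarded functional differential equations with Lipschitz right-hand side yields a unique local solution $u_n$ of
\begin{displaymath}
\dot u_n(t) = \phi_n\bigl(u_n(t),(u_n)_t\bigr), \qquad (u_n)_0 = \psi,
\end{displaymath}
on a common interval $[0,T]$, the uniform lower bound on $T$ coming from uniform boundedness of the $\phi_n$ on a neighbourhood of $(\psi(0),\psi)$. Next I would show $u_n \geq u_{n+1}$ on $[0,T]$. At $t=0^+$ the strict inequality $\dot u_n(0^+) > \dot u_{n+1}(0^+)$ gives $u_n > u_{n+1}$ on some $(0,\delta)$; if $t_0 \in (\delta, T]$ is the first subsequent touching time, then $(u_n)_{t_0} \geq (u_{n+1})_{t_0}$ pointwise on $[-\tau,0]$, so by $\psi$-monotonicity and $\phi_n > \phi_{n+1}$,
\begin{displaymath}
\dot u_n(t_0) \geq \phi_n\bigl(u_{n+1}(t_0),(u_{n+1})_{t_0}\bigr) > \phi_{n+1}\bigl(u_{n+1}(t_0),(u_{n+1})_{t_0}\bigr) = \dot u_{n+1}(t_0),
\end{displaymath}
contradicting the fact that the left derivative of $u_n - u_{n+1}$ at $t_0$ must be non-positive.

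Since the sequence $u_n$ is monotone decreasing with uniformly bounded derivatives, Arzelà–Ascoli together with monotonicity furnishes uniform convergence $u_n \to \bar u$ on $[0,T]$. Passing to the limit in the integral form of the approximating equations, using uniform convergence $\phi_n \to \phi$ on compacts and continuity of $\phi$, shows that $\bar u$ solves $\Sigma$. For any other solution $v$ of $\Sigma$ with $v_0 = \psi$, the identical touching-time comparison applied to $(v, u_n)$ (using $\phi_n > \phi$) forces $v \leq u_n$ for every $n$, hence $v \leq \bar u$; so $\bar u$ is the \emph{maximal} solution, and its maximality makes it unique. A standard Zorn-type continuation then delivers a maximal interval of existence $[0,T]$. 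The principal obstacle is carrying out the comparison principle for delay equations with only continuity and $\psi$-monotonicity in hand: without the Lipschitz or strict-ordering cushion provided by the approximation, the touching-time argument breaks down, which is precisely why the family $\phi_n$ must be engineered so that both the strict pointwise ordering \emph{and} the non-decreasing property in $\psi$ survive the regularization.
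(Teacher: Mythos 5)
The paper offers no proof of this lemma; it is quoted from Lakshmikantham and Leela \cite{8}, so the only meaningful comparison is with the classical argument in that reference. Your construction is, in essence, that classical argument: perturb the right-hand side upward by a null sequence, show the resulting approximate solutions decrease via a first-touching-time comparison that uses only the strict ordering of the right-hand sides and the monotonicity in $\psi$, then pass to the monotone limit and verify maximality by comparing any other solution against each approximant. The comparison step, the identification of the limit through the integral equation, and the maximality argument are all sound as written.

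The one step I do not believe as stated is the construction of the family $\phi_{n}$. You ask for globally Lipschitz $\phi_{n}$ converging to $\phi$ uniformly on bounded subsets of $\mathbf{R}_{+}\times D([-\tau,0];\mathbf{R}_{+})$, obtained by composing a monotone finite-dimensional nodal projection with mollification. Uniform approximation on bounded sets of $D$ by functions of finitely many nodal values is false in general: the monotone continuous functional $\phi(u,\psi)=\sup_{\theta}\psi(\theta)$ cannot be uniformly approximated on the unit ball of $D$ by any function of $\psi(\theta_{1}),\dots,\psi(\theta_{k})$, because a c\`{a}dl\`{a}g spike of height one supported between the nodes is invisible to the projection. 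Bounded sets of $D$ are not compact, and the later appeal to uniform convergence ``on compacts'' does not obviously cover the set $\{(u_{n}(t),(u_{n})_{t})\}$ either, since the segment map $t\mapsto\psi_{t}$ is not continuous into $(D,\|\cdot\|_{D})$ when $\psi$ has jumps. Fortunately the Lipschitz regularization is unnecessary: its only purpose is to produce the $u_{n}$ by Picard iteration, but uniqueness of the approximants plays no role in your argument. Taking simply $\phi_{n}=\phi+\tfrac{1}{n}$ and invoking Peano-type local existence for the scalar delay equation with continuous right-hand side (which is where \cite{8} does the real work) gives the strict ordering $\phi_{n}>\phi_{n+1}\geq\phi+\tfrac{1}{n+1}$ for free, preserves monotonicity in $\psi$ trivially, and leaves every subsequent step of your comparison and limiting argument intact.
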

\begin{lemma}\label{lem2}
Assume the following conditions hold.
\begin{enumerate}
  \item [(A1)] In addition  to the conditions in Lemma \ref{lem1}, the function $\phi(u,\psi)$ is concave in $u\in \mathbf{R}_{+}$ and $\psi\in D([-\tau,0];\mathbf{R}_{+})$.
  \item [(A2)] There exists a function $V(x,p)\in C^{2,0}(H\times \mathcal{S}; \mathbf{R}_{+})$ such that for any $p\in S$ and $\varphi\in D([-\tau,0];H)$ with $\varphi(0)\in D(A)$,
      \begin{equation}\label{conv}
       (\mathcal{L}V)(\varphi,p)\leq \phi(V(\varphi(0),p),V(\varphi,p)(\cdot))
      \end{equation}
       where
       \begin{equation*}
       V(\varphi,p)(\cdot):=\{V(\varphi(\theta),p),\theta\in[-\tau,0]\}\in  D([-\tau,0];\mathbf{R}_{+}).\end{equation*}
  \item [(A3)] The maximal solution $\bar{u}(t,\psi)$ of exists in $[0,\infty)$,i.e.,$ T=\infty$.
  Let $X(t)=X(t,\xi), t\geq0$, denote a mild solution of (\ref{eq1}) with initial $\xi(\cdot)\in D_{\mathcal{F}_{0}}^{b}([-\tau,0];H)$, and if $\mathbf{E}(V(\theta),p)\leq \psi(\theta)$, $\theta\in [-\tau,0]$, $P\in \mathcal{S}$, then
  \begin{equation}\label{ceq1}
    \mathbf{E}V(X(t), p)\leq \bar{u}(t,\psi),\,t\geq0,\,p\in \mathcal{S}
  \end{equation}
\end{enumerate}
\end{lemma}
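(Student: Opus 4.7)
The plan is to transfer the desired bound from the approximating strong solutions $X^{n}(t)$ of (\ref{eq22}) to the mild solution $X(t)$ of (\ref{eq1}) using the convergence $X^{n}\to X$ in $C([0,T];L^{2}(\Omega,\mathcal{F},\mathbf{P};H))$ supplied by the second lemma of Section~2, and for each $n$ to reduce the argument to the delay-integral comparison principle attached to Lemma~\ref{lem1}. The reason to detour through $X^{n}$ is that the It\^{o} formula recalled in Section~2 is only justified when the trajectory lies in $D(A)$, i.e.\ for strong solutions, which the mild solution of (\ref{eq1}) is not in general.

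First, fix $n\in\rho(A)$ and apply the It\^{o} formula to $V(X^{n}(t),\sigma(t))$ piecewise on each constant-$\sigma$ interval (so that the switching contributes no extra terms) and then take expectations; localization by a stopping sequence $\tau_{k}\uparrow\infty$ kills the $\mathrm{d}W$ and $\mathrm{d}\tilde{N}$ integrals, whose integrands are square-integrable by (H2)--(H3), leaving
\begin{equation*}
\mathbf{E}V(X^{n}(t),\sigma(t))=\mathbf{E}V(\xi(0),p_{0})+\int_{0}^{t}\mathbf{E}[\mathcal{L}V(X^{n}_{s},\sigma(s))]\,\mathrm{d}s.
\end{equation*}
Majorize the integrand via (A2) by $\phi(V(X^{n}(s),\sigma(s)),V(X^{n}_{s},\sigma(s))(\cdot))$ and push the expectation inside $\phi$ using the concavity hypothesis in (A1) (Jensen's inequality). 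Setting $u^{n}(t):=\mathbf{E}V(X^{n}(t),\sigma(t))$, this produces
\begin{equation*}
u^{n}(t)\leq \psi(0)+\int_{0}^{t}\phi(u^{n}(s),u^{n}_{s})\,\mathrm{d}s,
\end{equation*}
where the initial bound uses the hypothesis $\mathbf{E}V(\xi(\theta),p_{0})\leq\psi(\theta)$ on $[-\tau,0]$.

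Next, combining the monotonicity of $\phi$ in its functional argument (already built into Lemma~\ref{lem1}) with the global existence of the maximal solution $\bar{u}(t,\psi)$ guaranteed by (A3), the standard comparison principle for delay integral inequalities yields $u^{n}(t)\leq \bar{u}(t,\psi)$ for every $t\geq 0$ and every $n\in\rho(A)$. The cleanest way to carry this out is the usual perturbation trick: for each $\e>0$ solve $\dot{v}_{\e}=\phi(v_{\e},v_{\e,t})+\e$ with the same initial data, use strict inequality together with continuity of $u^{n}$ to prevent a first crossing, and then let $\e\downarrow 0$ by continuous dependence of the ODE with delay.

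Finally, let $n\to\infty$. The $L^{2}$ convergence $X^{n}\to X$ together with the continuity of $V(\cdot,p)$ yields $V(X^{n}(t),p)\to V(X(t),p)$ in probability, and a uniform-integrability argument (leaning on the a priori $L^{2}$-bound on $X^{n}$ provided by the approximation scheme, together with the polynomial-type growth of $V$ tacitly assumed in two-measure Lyapunov arguments) upgrades this to convergence in expectation, giving $\mathbf{E}V(X(t),p)\leq \bar{u}(t,\psi)$. The main obstacle I expect is precisely this last uniform-integrability step: it is not explicit in (A1)--(A3), and without a stated growth bound on $V$ one must extract the required moment control from (H2)--(H3). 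Jensen's step, by contrast, is immediate once concavity is in hand; and the switching causes no trouble because $\sigma(\cdot)$ is a deterministic piecewise constant index function, so It\^{o}'s formula splits cleanly across the switching times and contributes no new generator term.
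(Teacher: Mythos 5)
Your proposal is essentially correct and follows the same route the paper intends: the paper omits the proof entirely, deferring to Theorem 5.1 of Luo and Liu \cite{4}, and that argument is precisely your scheme of applying It\^{o}'s formula to the Yosida-approximated strong solutions $X^{n}$ of (\ref{eq22}), pushing the expectation through $\phi$ by Jensen's inequality via the concavity in (A1), invoking the delay comparison principle against $\bar{u}(t,\psi)$, and letting $n\to\infty$ using the convergence in $C([0,T];L^{2}(\Omega,\mathcal{F},\mathbf{P};H))$. The uniform-integrability caveat you flag in the final limit is a genuine point the paper glosses over, but it is resolved in practice because $V$ is dominated by $\alpha_{2}\circ h^{\circ}$ (condition (ii) of Theorem \ref{thm1}) together with the second-moment bounds that (H2)--(H3) impose on the approximating solutions.
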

The proof process is similar to that of Theorem 5.1 in \cite{4}, and we omit it.
\begin{theorem}\label{thm1}
For system (\ref{eq1}) with a sequence of switching instants $\{\tau_{j}\}_{j\geq1}$ and functions $h^{\circ},\,h\in \Gamma$. Suppose that there exists functions $\alpha_{1},\, \alpha_{2}\in \mathcal{K}_{\infty}$, $V(\cdot,p)\in C^{2}(H;\mathbf{R}_{+}) $ for each $p\in \mathcal{S}$, and a system $\Sigma$ of type (\ref{ceq}) such that \begin{enumerate}
       \item [(i)]  The conditions of Lemma \ref{lem2} hold.
       \item [(ii)] %The family $\{V_{p}|p\in S\}$ is $S$-uniformly $h$ positive definite and $h^{\circ}$ decrescent; $\alpha_{1},\;\alpha_{2}\in \mathcal{K}_{\infty}$ such that we have
%           $
%             \alpha_{1}h(X)\leq V(X,p)\leq \alpha_{1}h^{\circ}(X), \quad\forall\;p\in\mathcal{S}
%         $
         $\forall \;(\varphi,p)\in D\times\mathcal{S},$ $\alpha_{1}h(\varphi)\leq V(\varphi(0),p)\leq \alpha_{2}h^{\circ}(\varphi)$ and
         $\alpha_{1}$ is convex.
     % \item [(iii)]$\forall x\in H  $ , $\forall i\in S$, the estimate
%       $(\mathcal{L}_{n}V)(X_{n}(t),i)\leq \phi(V(X_{n}(t),i),V(X^{n}_{t},i)(\cdot))$ holds.
          \item [(iii)] $\forall \xi\in D,$ there exists $\psi\in D([-\tau,0];\mathbf{R}_{+})$ such that
          \begin{equation*}
            \mathbf{E}[V(X^{n}(t),\sigma(\tau_{i}))]\leq \bar{u}(\tau_{i},\psi)~\text{for~all}~i\geq 0,
          \end{equation*}
       where $X^{n}(t)$ and $\bar{u}(\tau_{i},\psi)$ are the solution of (\ref{eq22}) and the solution of $\Sigma,$ respectively.
       \item [(iv)] $\Sigma$ is globally asymptotic stable in the sense that there is a function $\beta_{\psi}\in\mathcal{KL}$ such that the inequality
       \begin{equation}
        |\bar{u}(t,\psi)|\leq \beta_{\psi}(\|\psi\|_{D},t),\quad\forall\, t\geq0,
       \end{equation}
       holds.
     \end{enumerate}
Then system (\ref{eq1}) is $(h^{\circ},h)$-GAS-M in the sense of Definition \ref{def1}.
\end{theorem}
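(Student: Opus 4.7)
The plan is to establish the bound $\mathbf{E}[h(X_{t})]\le\beta(h^{\circ}(\xi),t)$ for the mild solution by first proving the analogous bound for the approximating strong solutions $X^{n}$ of system (\ref{eq22}), and then passing $n\to\infty$ using the convergence $X^{n}\to X$ in $C([0,T];L^{2}(\Omega,\mathcal{F},\mathbf{P};H))$ supplied by the second lemma cited from \cite{1}. The four hypotheses are tailored to this pipeline: (i) and (iii) yield a deterministic scalar comparison for the moments of $V$ along $X^{n}$; (ii) sandwiches $h$ and $h^{\circ}$ around $V$; and (iv) gives the $\mathcal{KL}$ decay of the comparison solution $\bar{u}$.

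First I would fix $\xi\in D^{b}_{\mathcal{F}_{0}}([-\tau,0];H)$ and exploit the right inequality in (ii) to choose the initial datum of $\Sigma$ so that $\psi(\theta)\ge V(\xi(\theta),p_{0})$ pointwise; the concrete choice $\psi\equiv\alpha_{2}(h^{\circ}(\xi))$ gives $\|\psi\|_{D}=\alpha_{2}(h^{\circ}(\xi))$. Applying \lemref{lem2} together with (iii) on each inter-switching interval $[\tau_{j},\tau_{j+1})$ (on which $\sigma$ is constant so that the standard It\^o-based comparison from \cite{4} used in \lemref{lem2} applies verbatim) and iterating across the switching instants using (iii), I obtain $\mathbf{E}V(X^{n}(t),\sigma(t))\le\bar{u}(t,\psi)$ for every $t\ge0$. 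The left inequality in (ii) then yields $\alpha_{1}(h(X^{n}_{t}))\le V(X^{n}(t),\sigma(t))$, and since $\alpha_{1}$ is convex, Jensen's inequality upgrades this after taking expectations to
\begin{equation*}
\alpha_{1}\bigl(\mathbf{E}h(X^{n}_{t})\bigr)\le\mathbf{E}[\alpha_{1}(h(X^{n}_{t}))]\le\mathbf{E}V(X^{n}(t),\sigma(t))\le\bar{u}(t,\psi)\le\beta_{\psi}(\|\psi\|_{D},t),
\end{equation*}
where the last inequality is (iv). Since $\alpha_{1}\in\mathcal{K}_{\infty}$ is invertible, this gives the uniform-in-$n$ bound $\mathbf{E}h(X^{n}_{t})\le\alpha_{1}^{-1}\bigl(\beta_{\psi}(\alpha_{2}(h^{\circ}(\xi)),t)\bigr)$.

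Finally I would pass $n\to\infty$. The convergence of $X^{n}$ to $X$ in $C([0,T];L^{2})$ supplied by the second cited lemma gives $X^{n}_{t}\to X_{t}$ in probability in $D([-\tau,0];H)$; combining continuity of $h\in\Gamma$ with Fatou's lemma (legitimate since $h\ge0$) yields $\mathbf{E}h(X_{t})\le\liminf_{n}\mathbf{E}h(X^{n}_{t})$. Defining
\begin{equation*}
\beta(r,t):=\alpha_{1}^{-1}\bigl(\beta_{\psi}(\alpha_{2}(r),t)\bigr),
\end{equation*}
one checks $\beta\in\mathcal{KL}$ (composition of elements of $\mathcal{K}_{\infty}$ with a $\mathcal{KL}$ function), and the previous estimates combine to give $\mathbf{E}h(X_{t})\le\beta(h^{\circ}(\xi),t)$ for all $t\ge0$, matching \defref{def1}. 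The step I expect to need most care is the limit passage: one must justify that moments $\mathbf{E}h(X^{n}_{t})$ control $\mathbf{E}h(X_{t})$ in the limit even though $h$ is a priori unbounded, and the cleanest route is the Fatou argument above, falling back on uniform integrability derived from the linear growth in (H2) and the $L^{2}$-convergence from \lemref{lem2} should a tighter two-sided limit be needed.
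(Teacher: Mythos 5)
Your proposal follows essentially the same route as the paper's own proof: bound $\mathbf{E}V(X^{n}(t),\sigma(\tau_{l}))$ by $\bar{u}(t,\psi)$ via \lemref{lem2} and hypothesis (iii), invoke (iv) for the $\mathcal{KL}$ decay, sandwich with (ii) to get $\mathbf{E}h(X^{n}_{t})\le\alpha_{1}^{-1}\beta(\alpha_{2}(h^{\circ}(\xi)),t)$, and let $n\to\infty$. In fact you are somewhat more careful than the paper at the two points it glosses over --- you explicitly use Jensen's inequality (which is why (ii) demands $\alpha_{1}$ convex) to pass $\alpha_{1}$ inside the expectation, and you justify the limit passage with Fatou's lemma rather than asserting it.
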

\begin{proof}
Consider the interval $[\tau_{l},\tau_{l+1}[ $, with $l$ an arbitrary nonnegative integer. According to hypotheses (iv), (iii) and (i), and Lemma \ref{lem2} we have
\begin{equation}
 \mathbf{E}[V(X^{n}(t),\sigma(\tau_{l}))]\leq \bar{u}(t,\psi),\quad \forall t\in [\tau_{l},\tau_{l+1}[.
\end{equation}
 By the the hypothesis (iv), we know $|\bar{u}(t,\psi)|\leq \beta_{\xi}(\|\psi\|_{D},t),$ $\forall t\in [\tau_{l},\tau_{l+1}[$, we get
\begin{equation}
    \mathbf{E}[V(X^{n}(t),\sigma(\tau_{l}))]\leq \beta_{\xi}(\|\psi\|_{D},t),
\end{equation}
where $X_{n}(t)$ is the solution of system (\ref{eq22}). By (i) and (ii), we have
 \begin{equation*}
\alpha_{1}\mathbf{E}h(X^{n}_{t})\leq \mathbf{E}V(X^{n}(t),\sigma(\tau_{l}))\leq \beta_{\xi}(\|\psi\|_{D},t),~ \forall t\in [\tau_{l},\tau_{l+1}[,
 \end{equation*}
which leads to
\begin{equation*}
    \mathbf{E}h(X^{n}_{t})\leq \alpha_{1}^{-1}\beta_{\xi}(\alpha_{2}h^{\circ}(\psi),t).
\end{equation*}
We take $\beta(z,t):=\alpha_{1}^{-1}\beta_{\xi}(\alpha_{2}(z),t)$, obviously $\beta\in \mathcal{KL}$. Letting $n\rightarrow\infty$ , we get
%\begin{equation}
%    \mathbf{E}h(\|X_{t}(0)\|_{H})\leq \beta(h^{\circ}(\psi),t),
%\end{equation}
\begin{equation}
    \mathbf{E}h(X_{t})\leq \beta(h^{\circ}(\psi),t),
\end{equation}
so system (\ref{eq1}) is $(h^{\circ},h)$-GAS-M in the sense of Definition \ref{def1}.\end{proof}
\begin{corollary} Suppose that the conditions in Theorem \ref{thm1} hold, then system (\ref{eq1}) is $(h^{\circ},h)$-GAS-P in the sense of Definition \ref{def2}.
\end{corollary}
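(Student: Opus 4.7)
The plan is to deduce stability in probability directly from the mean-stability bound produced by Theorem \ref{thm1} via a one-line application of Markov's inequality to the nonnegative random variable $h(X_t)$, together with a rescaling of the $\mathcal{KL}$ function to accommodate the prescribed tail level $\eta$. No further SDE analysis is required, because Theorem \ref{thm1} already furnishes all the analytic content, namely a class-$\mathcal{KL}$ function $\beta$ with $\mathbf{E}[h(X_t)] \leq \beta(h^\circ(\xi),t)$ for every $t \geq 0$ and every admissible initial datum $\xi$.

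Given $\eta \in (0,1]$, I would set $\tilde\beta(z,t) := (2/\eta)\,\beta(z,t)$. Since multiplying by the positive constant $2/\eta$ preserves all the defining properties of class $\mathcal{KL}$ ($\tilde\beta(\cdot,t)$ remains strictly increasing with $\tilde\beta(0,t)=0$, and $\tilde\beta(z,t)\to 0$ as $t\to\infty$ for each fixed $z$), one has $\tilde\beta\in\mathcal{KL}$. For any $\xi$ with $h^\circ(\xi) > 0$, Markov's inequality gives
\[
\mathbf{P}\bigl[h(X_t) \geq \tilde\beta(h^\circ(\xi),t)\bigr] \leq \frac{\mathbf{E}[h(X_t)]}{\tilde\beta(h^\circ(\xi),t)} \leq \frac{\beta(h^\circ(\xi),t)}{(2/\eta)\,\beta(h^\circ(\xi),t)} = \frac{\eta}{2} < \eta,
\]
which is exactly the bound required by Definition \ref{def2}. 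The factor $2$ is inserted purely to upgrade the weak Markov estimate $\leq \eta$ to the strict inequality $< \eta$ demanded in Definition \ref{def2}; any constant strictly greater than $1/\eta$ would do.

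The only delicate point I would watch is the degenerate case $h^\circ(\xi) = 0$, in which $\beta(h^\circ(\xi),t) = 0$ and the Markov bound has a vanishing denominator. Here the mean bound $\mathbf{E}[h(X_t)]\leq 0$ forces $h(X_t)=0$ almost surely, so $\mathbf{P}[h(X_t)\geq\tilde\beta(0,t)] = \mathbf{P}[h(X_t)\geq 0] \leq \mathbf{P}[h(X_t) > 0] + \mathbf{P}[h(X_t)=0]$, and to conform to Definition \ref{def2} one may simply replace $\tilde\beta(z,t)$ by $\tilde\beta(z,t)+\varepsilon$ with $\varepsilon>0$ (or, more cleanly, interpret Definition \ref{def2} over $\eta\in(0,1]$, as the case $\eta=0$ would in general be unattainable). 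Apart from this bookkeeping point, the corollary is a genuine three-line consequence of Theorem \ref{thm1}, and I do not anticipate any real obstacle.
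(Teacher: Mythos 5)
Your proposal is correct and follows essentially the same route as the paper: the paper also rescales the $\mathcal{KL}$ function from Theorem \ref{thm1} by $2/\varepsilon$ and applies the Markov--Chebyshev inequality to $h(X_t)$ to obtain the bound $\varepsilon/2<\varepsilon$. Your additional care with the degenerate case $h^{\circ}(\xi)=0$ and with $\eta=0$ goes beyond the paper's (rather terse, and typographically garbled) argument, but the underlying idea is identical.
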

 \begin{proof}By Theorem \ref{thm1}, we know there exist a function $\beta(z,s)\in \mathcal{KL}$ such that $\mathbf{E}h(X_{t})\leq \beta(h^{\circ}(\psi),t)$. For any $\varepsilon\in[0,1]$, letting $\tilde{\beta}=2\frac{\beta(z,s)}{\varepsilon}$ for all $(z,s)\in \mathbf{R}^{2}_{+}$, by the Chebyshev's inequality, we get
\begin{eqnarray*}
\mathbf{P}[\mathbf{E}h(X_{t})\leq \tilde{\beta}(h^{\circ}(\psi),t)]\leq \frac{\mathbf{E}h(X_{t})}{\tilde{\beta}(h^{\circ}(\psi),t)}\leq\frac{\varepsilon}{2}<\varepsilon.
\end{eqnarray*}
So system (\ref{eq1}) is $(h^{\circ},h)$-GAS-P in the sense of Definition \ref{def2}.\end{proof}
As mentioned in \cite{s}, once we find a suitable comparison system, the stability of a given switched system can be deduced by Theorem \ref{thm1}.
Since switched systems are usually used to model many physical or man made systems, the diverse practical system can be identified by the different  switching signals in some sense. In the following we established the conditions that sufficient for the $(h^{\circ},h)$-GAS-M property of a switched system under a fixed-index sequence monotonicity condition.
\section{Stability under fixed-index sequence monotonicity condition}
 In this section, we consider stochastic switched system (\ref{eq1}) under fixed-index sequence monotonicity condition.
\begin{theorem}\label{the1}
If there exist functions $\alpha,\, \alpha_{1},\, \alpha_{2},\,\rho,\,\rho,\,U\in \mathcal{K}_{\infty}$. $V(\cdot,p)\in C^{2,1}(H;\mathbf{R}_{+})$ for each $p\in \mathcal{S}$, such that
\begin{enumerate}
  \item [(i)] $\alpha_{1},\,\alpha\circ\alpha_{2}^{-1}$ and $U\circ\alpha_{2}^{-1}$ are convex and $\rho$ is concave;
  \item [(ii)] The family $\{V(\cdot,p)|p\in \mathcal{S}\}$ is $\mathcal{S}$-uniformly $h$ positive definite and $h^{\circ}$ decrescent,
  \item [(iii)] $\forall x\in H  $ , $\forall p\in \mathcal{S}$, the estimate
       $(\mathcal{L}V)(\varphi(0),\varphi,p)\leq -\alpha\circ h^{\circ}(\varphi)$
        holds, where $\varphi(0)\in D(A),$
  \item [(iv)] for every pair of switching time $(\tau_{i}, \tau_{j}), $ $i<j$ such that $\sigma(\tau_{i})=\sigma_{\tau_{j}}=p$ and $\sigma(\tau_{k})\neq p$ for $\tau_{i}<\tau_{k}<\tau_{j}$, the inequality
      \begin{equation}
        \mathbf{E}V(X^{n}(\tau_{j}),p)- \mathbf{E}V(X^{n}(\tau_{i}),p)\leq - \mathbf{E}U\circ h^{\circ}(X^{n}_{\tau_{i}})
      \end{equation}
      holds, where $X^{n}(t) $ is the solution process of (\ref{eq22}).
  \item [(v)] $\forall \varphi\in D  $, we have $\alpha_{2}\circ h^{\circ}(\varphi)\leq \rho\circ\alpha_{1}\circ h (\varphi),$\\
      then (\ref{eq1}) is $(h^{\circ},h)$-GAS-M.
      \end{enumerate}
\end{theorem}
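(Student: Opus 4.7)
I would apply Theorem~\ref{thm1} to the scalar delay-free comparison system
\[
\Sigma:\quad \dot u(t) = \phi(u(t), u_t), \qquad \phi(u,\psi) := -(\alpha\circ\alpha_2^{-1})(u), \qquad u_0 = \psi.
\]
Conditions~(i) and~(ii) of Theorem~\ref{thm1} reduce, through Lemma~\ref{lem2}, to three verifications: $\phi$ is concave in its first argument and non-decreasing in the second, which is immediate because $\alpha\circ\alpha_2^{-1}$ is convex by hypothesis~(i) and $\phi$ is independent of $\psi$; the dissipation inequality of Lemma~\ref{lem2}(A2) is obtained by chaining the estimates~(iii) and~(ii) of the present theorem,
\[
\mathcal{L}V(\varphi,p) \leq -\alpha\circ h^\circ(\varphi) \leq -\alpha\circ\alpha_2^{-1}\bigl(V(\varphi(0),p)\bigr) = \phi\bigl(V(\varphi(0),p),\, V(\varphi,p)(\cdot)\bigr);
\]
and global existence of $\bar u$ together with the two-measure sandwich required by Theorem~\ref{thm1}(ii) follow from hypothesis~(ii) and the fact that $\alpha\alpha_2^{-1}\in\mathcal{K}_\infty$. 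The global asymptotic stability of $\Sigma$ required by Theorem~\ref{thm1}(iv) is then standard for the scalar ODE $\dot u = -(\alpha\alpha_2^{-1})(u)$ and yields some $\beta_\psi\in\mathcal{KL}$.

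The key and nontrivial step is Theorem~\ref{thm1}(iii), namely choosing $\psi$ so that $\mathbf{E}V(X^n(\tau_i),\sigma(\tau_i)) \leq \bar u(\tau_i,\psi)$ at every switching instant $\tau_i$. On each inter-switching interval $[\tau_l,\tau_{l+1})$ the index $\sigma$ is constant, so It\^o's formula applied to the strong solution $X^n$ of~\eqref{eq22}, together with Fubini and Jensen's inequality (legitimate by convexity of $\alpha\alpha_2^{-1}$), gives
\[
\tfrac{d}{dt}\mathbf{E}V(X^n(t),\sigma(\tau_l)) \leq -(\alpha\alpha_2^{-1})\bigl(\mathbf{E}V(X^n(t),\sigma(\tau_l))\bigr),
\]
so $\mathbf{E}V(X^n(\cdot),\sigma(\tau_l))$ is dominated by $\bar u$ on that interval. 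The hard part is the \emph{jump} that occurs when the index changes from $\sigma(\tau_l)$ to $\sigma(\tau_{l+1})$: a priori, $V(X^n(\tau_{l+1}),\sigma(\tau_{l+1}))$ can exceed $V(X^n(\tau_{l+1}),\sigma(\tau_l))$ since these are different Lyapunov functions. Here hypothesis~(iv) is indispensable; applied along the subsequence $\tau_{i_1}<\tau_{i_2}<\cdots$ of visits to a fixed index $p$, it yields, after invoking convexity of $U\circ\alpha_2^{-1}$ from~(i) and a second Jensen step,
\[
\mathbf{E}V(X^n(\tau_{i_{k+1}}),p) - \mathbf{E}V(X^n(\tau_{i_k}),p) \leq -\mathbf{E}U\circ h^\circ(X^n_{\tau_{i_k}}) \leq -U\circ\alpha_2^{-1}\bigl(\mathbf{E}V(X^n(\tau_{i_k}),p)\bigr).
\]
Thus each fixed-index subsequence is strictly decreasing to zero, and hypothesis~(v), $\alpha_2 h^\circ \leq \rho\circ\alpha_1\circ h$ combined with $V\geq\alpha_1\circ h$, lets me translate this into a uniform bound $\mathbf{E}V(X^n(\tau_i),\sigma(\tau_i)) \leq \bar u(\tau_i,\psi)$ once $\psi$ is taken large, for instance $\psi\equiv\alpha_2\circ h^\circ(\xi)$.

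With all four hypotheses of Theorem~\ref{thm1} verified, that theorem delivers the $(h^\circ,h)$-GAS-M property for~\eqref{eq1}, after finally passing $n\to\infty$ in the approximation Lemma~2.2. The main obstacle is the switching book-keeping: one has to couple the continuous-time dissipation (from~(iii)) with the discrete-time monotonicity along same-index subsequences (from~(iv)) via the two Jensen applications enabled by the convexity requirements in~(i), then thread the outcome through~(ii) and~(v) to pull the bound back onto $h$ and $h^\circ$ of the initial datum.
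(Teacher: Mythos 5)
Your plan follows essentially the same route as the paper: you build the comparison system $\dot u=-(\alpha\circ\alpha_{2}^{-1})(u)$, obtain the inter-switch decay from (ii)--(iii) via Jensen, handle the switches through the fixed-index monotonicity (iv) together with the convexity of $U\circ\alpha_{2}^{-1}$ and condition (v), and then invoke Theorem~\ref{thm1} and let $n\to\infty$. The only difference is one of exposition: the paper compresses the entire multiple-Lyapunov/fixed-index argument into a citation of Corollary~3.11 of Chatterjee and Liberzon, whereas you sketch that argument explicitly, which is a reasonable (indeed more self-contained) presentation of the same proof.
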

\begin{proof}
We define the following impulsive system as a comparison system
\begin{equation}\label{eqc}
    \left\{ \begin{array}{ll}
 \dot{\zeta}&=-\alpha\circ\alpha_{2}^{-1}(\zeta),\quad t\geq0,~t\neq \tau_{i}\\
\zeta(\tau_{i})&=\mathbf{E}V_{\sigma(\tau_{i})}(\|X_{n}(\tau_{i})\|_{H},\sigma(\tau_{i})),\quad i\geq 1,\:\\
\zeta(0)&=\|\xi(0)\|_{H}
\end{array} \right.
\end{equation}
Similar to the proof progress of the Corollary 3.11 in \cite{s}, we can show system (\ref{eqc}) is globally uniformly asymptotically stable.
By Theorem \ref{thm1}, there exists a function $\beta\in\mathcal{KL}$ such that
\begin{equation}
  \mathbf{E}h(X^{n}_{t})\leq \beta\big(h^{\circ}(R(n)\xi(\cdot))\big), \quad\forall t\geq0,
\end{equation}
and letting $n\rightarrow\infty$, we get
\begin{equation}
  \mathbf{E}h(X_{t})\leq \beta(h^{\circ}(\xi(\cdot)), \quad\forall t\geq0,
\end{equation}
which implies that (\ref{eq1}) is $(h^{\circ},h)$-GAS-M.\end{proof}
It is possible to obtain less conservative stability conditions involving more specific policies on the switching signals. In the following section, we consider the stochastic stability for system (\ref{eq1}) under average dwell-time switching.
\section{Stochastic stability under average dwell-time switching }
In this section we investigate conditions on the average dwell-time $\tau_{a}$ of a switching signal such that the solution (\ref{eq1})
is globally asymptotically stable in the $q$th mean. We need the definition of average dwell-time from \cite{9} for the following results.
\begin{definition}\label{adt}(\cite{9}) For a switching signal $\sigma$ and any $t_{2}>t_{1}>t_{0},$ let $N_{\sigma}(t_{1},t_{2})$ be the switching numbers of $\sigma(t)$ over the interval $[t_{1},t_{2}).$ If $N_{\sigma}(t_{1},t_{2})\leq N_{0}+\frac{t_{2}-t_{1}}{\tau_{a}}$ holds for $N_{0}\geq1,\,\tau_{a}>0,$ then $\tau_{a}$ and $N_{0}$ are called the average dwell-time and the chatter bound, respectively.
\end{definition}
\begin{lemma}\label{lem3}(\cite{zhu1})
Assume that $a_{1}$, $a_{2}$ are two constants and $a_{2}>0.$  $u:[t_{0},\infty)\rightarrow \mathbf{R}_{+}$ satisfy the following delay differential inequality
\begin{equation}
  \dot{u}(t)\leq a_{1}u(t)+a_{2}\|u_{t}\|_{D},\quad t\geq t_{0},
\end{equation} where $\|u_{t}\|_{D}=\sup_{-\tau\leq s\leq 0}\|u(t+s)\|.$
If $a_{1}+a_{2}<0,$ then there exists a positive constant $\lambda$ satisfying $\lambda+a_{1}+a_{2}e^{\lambda\tau}<0$ such that
\begin{equation}
  u(t)\leq \|u_{ t_{0}}\|_{D}e^{-\lambda(t-t_{0})},\quad t\geq t_{0}.
\end{equation}
\end{lemma}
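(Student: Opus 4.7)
The plan is to run the classical Halanay-type contradiction argument used for delay differential inequalities of this form. The first step is to verify that an admissible $\lambda>0$ exists: the map $f(\lambda):=\lambda+a_{1}+a_{2}e^{\lambda\tau}$ is continuous and strictly increasing on $\mathbf{R}_{+}$ (since $a_{2}>0$), with $f(0)=a_{1}+a_{2}<0$ and $f(\lambda)\to\infty$, so there is a unique root $\lambda_{0}>0$ and any $\lambda\in(0,\lambda_{0})$ satisfies the strict inequality required in the statement. Fix such a $\lambda$ once and for all, and set $M:=\|u_{t_{0}}\|_{D}$.

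Next I would compare $u$ to the exponential envelope $(M+\varepsilon)e^{-\lambda(t-t_{0})}$ for an arbitrary $\varepsilon>0$. I assume for contradiction that the first crossing time $t^{*}:=\inf\{t>t_{0}:u(t)>(M+\varepsilon)e^{-\lambda(t-t_{0})}\}$ is finite. Continuity of $u$ then gives equality at $t^{*}$, the envelope inequality on $[t_{0},t^{*}]$, and the lower one-sided estimate $D^{+}u(t^{*})\geq -\lambda(M+\varepsilon)e^{-\lambda(t^{*}-t_{0})}$ on the upper Dini derivative.

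The decisive step, and in my view the main obstacle, is controlling $\|u_{t^{*}}\|_{D}$ by a multiple of the envelope value at $t^{*}$. For $s\in[-\tau,0]$ with $t^{*}+s\geq t_{0}$, the envelope inequality on $[t_{0},t^{*}]$ gives $u(t^{*}+s)\leq(M+\varepsilon)e^{\lambda\tau}e^{-\lambda(t^{*}-t_{0})}$. The case $t^{*}+s<t_{0}$ requires separate bookkeeping: then $t^{*}-t_{0}<\tau$, so by the definition of $M$ we have $u(t^{*}+s)\leq M$, and the factor $e^{\lambda\tau}e^{-\lambda(t^{*}-t_{0})}\geq 1$ absorbs this. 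This is exactly where the factor $e^{\lambda\tau}$ in the transcendental condition on $\lambda$ originates, and it is the step most easily mishandled.

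Once the bound $\|u_{t^{*}}\|_{D}\leq(M+\varepsilon)e^{\lambda\tau}e^{-\lambda(t^{*}-t_{0})}$ is in hand, inserting it into the delay inequality yields $D^{+}u(t^{*})\leq (M+\varepsilon)e^{-\lambda(t^{*}-t_{0})}\bigl(a_{1}+a_{2}e^{\lambda\tau}\bigr)$. Combining with the lower Dini bound forces $\lambda+a_{1}+a_{2}e^{\lambda\tau}\geq 0$, contradicting the choice of $\lambda$. Hence $t^{*}=\infty$, which gives $u(t)\leq(M+\varepsilon)e^{-\lambda(t-t_{0})}$ on $[t_{0},\infty)$; sending $\varepsilon\downarrow 0$ concludes.
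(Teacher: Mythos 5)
The paper does not actually prove this lemma: it is stated with a citation to Zhu (2010) and used as a black box, so there is no in-paper argument to compare against. Your proof is the standard Halanay-type contradiction argument and it is correct: the existence of an admissible $\lambda$ via the monotone function $f(\lambda)=\lambda+a_{1}+a_{2}e^{\lambda\tau}$, the first-crossing-time comparison with the envelope $(M+\varepsilon)e^{-\lambda(t-t_{0})}$, and in particular the case split for $t^{*}+s<t_{0}$ (absorbed because $e^{\lambda\tau}e^{-\lambda(t^{*}-t_{0})}\geq1$ there) are all handled properly, and the two Dini-derivative bounds at $t^{*}$ do yield $\lambda+a_{1}+a_{2}e^{\lambda\tau}\geq0$, the desired contradiction. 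The only implicit hypotheses you rely on, continuity of $u$ (so that $u(t^{*})$ equals the envelope at the first crossing) and the fact that $u$ is defined on $[t_{0}-\tau,t_{0}]$ with $\|u_{t_{0}}\|_{D}<\infty$, are already built into the lemma's statement, so the argument is complete as written.
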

\begin{theorem}Suppose the following assumptions hold for the switched system (\ref{eq1}).
\begin{enumerate}
  \item [(A1)]There exist functions $\alpha_{1},\,\alpha_{2}\,\in \mathcal{K}_{\infty},$ $V(\cdot,p)\in C^{2}(H,\mathbf{R})$ for each $p\in\mathcal{S},$ and $\mu>1$ such that
\begin{equation}\label{a}
 \alpha_{1}(\|x\|_{H}^{q})\leq V(x,\sigma(\tau_{l}))\leq \alpha_{1}(\|x\|_{H}^{q})
\end{equation} with $\alpha_{1}$ convex, $q\geq1.$
  \item  [(A2)]\label{f}
  \begin{eqnarray}
\mathcal{L}V(X^{n}_{t},p)\leq \gamma_{1}V(X^{n}(t),p)+\gamma_{2}\sup_{-\tau \leq s\leq0}V(X_{n}(t+s),p),
  \end{eqnarray}
   where $ \gamma_{1}+\gamma_{2}<0,$ $X^{n}(t) $ is the solution process of (\ref{eq22}).
  \item  [(A3)] There is a positive number $\mu>1$ such that
   \begin{equation}\label{g}
 V(x,p_{1})\leq \mu V(x,p_{2}),~\forall x\in H, \forall p_{1},p_{2}\in \mathcal{S}.
 \end{equation}
\end{enumerate}
 Then (\ref{eq1}) is globally asymptotically stable in the $q$th mean for every switching signal $\sigma$ with average dwell-time
 \begin{equation}\label{e}
  \tau_{a}>\tilde{\tau_{a}}=\frac{\ln \mu}{\lambda},
\end{equation}
where $\lambda$ satisfies $\lambda+\gamma_{1}+\gamma_{2}e^{\lambda\tau}<0.$
\end{theorem}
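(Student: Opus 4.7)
The plan is to follow the classical average dwell-time scheme of Hespanha--Morse, adapted to the mild/strong solution setup already developed earlier in the paper. Work first with the strong solution $X^{n}(t)$ of the approximating system \eqref{eq22}, establish exponential decay of $\mathbf{E}V(X^{n}(t),\sigma(t))$, translate this into decay of $\mathbf{E}\|X^{n}(t)\|_{H}^{q}$ via (A1), and finally let $n\to\infty$ to conclude for the mild solution of \eqref{eq1}, exactly as in the proofs of \thmref{thm1} and \thmref{the1}.

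On any interval $[\tau_{l},\tau_{l+1})$ between two successive switching instants, $\sigma$ is constant, say equal to $p_{l}$. Apply the It\^{o} formula displayed before \defref{def1} to $V(\cdot,p_{l})$, take expectation (martingale terms vanish by the usual integrability from (H2)--(H3) and the strong-solution property), and use (A2) to obtain
\begin{equation*}
\frac{\mathrm{d}}{\mathrm{d}t}\mathbf{E}V(X^{n}(t),p_{l})\leq \gamma_{1}\mathbf{E}V(X^{n}(t),p_{l})+\gamma_{2}\sup_{-\tau\leq s\leq 0}\mathbf{E}V(X^{n}(t+s),p_{l}).
\end{equation*}
Since $\gamma_{1}+\gamma_{2}<0$, \lemref{lem3} applied on $[\tau_{l},\tau_{l+1})$ yields a $\lambda>0$ with $\lambda+\gamma_{1}+\gamma_{2}e^{\lambda\tau}<0$ such that
\begin{equation*}
\mathbf{E}V(X^{n}(t),p_{l})\leq \Big(\sup_{\tau_{l}-\tau\leq s\leq \tau_{l}}\mathbf{E}V(X^{n}(s),p_{l})\Big)\,e^{-\lambda(t-\tau_{l})},\quad t\in[\tau_{l},\tau_{l+1}).
\end{equation*}
At each switching instant use (A3) to pass from the Lyapunov function indexed by the outgoing mode to that indexed by the incoming one, picking up one factor $\mu$. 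Iterating this across the $N_{\sigma}(t_{0},t)$ switches in $[t_{0},t]$, one obtains
\begin{equation*}
\mathbf{E}V(X^{n}(t),\sigma(t))\leq \mu^{N_{\sigma}(t_{0},t)}\,e^{-\lambda(t-t_{0})}\sup_{t_{0}-\tau\leq s\leq t_{0}}\mathbf{E}V(X^{n}(s),\sigma(t_{0})).
\end{equation*}
Invoking \defref{adt}, $\mu^{N_{\sigma}(t_{0},t)}\leq \mu^{N_{0}}e^{(\ln\mu/\tau_{a})(t-t_{0})}$, so the right-hand side is bounded by $\mu^{N_{0}}e^{-(\lambda-\ln\mu/\tau_{a})(t-t_{0})}$ times the initial supremum; the condition \eqref{e} makes the rate $\lambda-\ln\mu/\tau_{a}$ strictly positive, giving exponential decay of $\mathbf{E}V(X^{n}(t),\sigma(t))$. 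Using (A1) and convexity of $\alpha_{1}$ together with Jensen's inequality, this transfers to exponential decay of $\mathbf{E}\|X^{n}(t)\|_{H}^{q}$. Finally, letting $n\to\infty$ and using the almost-sure convergence of $X^{n}$ to the mild solution $X$ in $C([0,T];L^{2}(\Omega;H))$ from the second lemma quoted after \eqref{eq22} (plus standard uniform integrability arguments to move the limit inside the expectation of $\|\cdot\|_{H}^{q}$) yields GAS in the $q$th mean of \eqref{eq1}.

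The main obstacle I expect is the careful bookkeeping of the delayed term $\sup_{-\tau\leq s\leq 0}V(X^{n}(t+s),p)$ at switching instants: when $t-\tau<\tau_{l}$ the supremum reaches back into an interval where $\sigma$ equalled a different index $p_{l-1}\neq p_{l}$, so one must apply (A3) inside the supremum to convert $V(X^{n}(s),p_{l-1})$ into $V(X^{n}(s),p_{l})$ at the cost of another factor $\mu$, and then verify that the exponent $\lambda$ produced by \lemref{lem3} is unchanged (only the multiplicative pre-factor is inflated by at most $\mu$ per crossed switch). This is also where the assumption that there is a \emph{uniform} $\lambda$ independent of the active mode gets used, and where the chatter bound $N_{0}$ in \defref{adt} absorbs the finite extra $\mu$-factors so that the overall estimate remains of the advertised form. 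Once that piecewise comparison is justified, the rest is the routine combination of exponential decay on each interval with the dwell-time counting.
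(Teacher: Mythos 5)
Your proposal is correct in substance, and its computational core coincides with the paper's: exponential decay at rate $\lambda$ between consecutive switches via \lemref{lem3}, a multiplicative factor $\mu$ at each switch via (A3), and the count $\mu^{N_{\sigma}(t_{0},t)}\leq\mu^{N_{0}}e^{(\ln\mu/\tau_{a})(t-t_{0})}$ from \defref{adt}, so that (\ref{e}) makes the net exponent strictly negative. The structural difference is that the paper does not run this estimate on $\mathbf{E}V(X^{n}(t),\sigma(t))$ directly: it introduces the scalar impulsive comparison system (\ref{im1}), carries out the identical decay-plus-jump iteration on its solution $\xi$, concludes that (\ref{im1}) is globally asymptotically stable under (\ref{e}), and then verifies hypotheses (i), (iii) and (iv) of \thmref{thm1} so that the general comparison framework transfers stability to the mild solution of (\ref{eq1}). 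Your direct route buys independence from \thmref{thm1} (whose hypothesis (iii) the paper checks only in passing), at the price of having to justify yourself the two points you flag: first, the passage from the pathwise bound (A2) to a differential inequality for $t\mapsto\mathbf{E}V(X^{n}(t),p)$ is not automatic, since $\mathbf{E}\sup_{-\tau\leq s\leq 0}V\geq\sup_{-\tau\leq s\leq 0}\mathbf{E}V$ and the delayed term therefore does not immediately take the form required by \lemref{lem3} unless (A2) is read at the level of expectations --- a point the paper's appeal to its comparison machinery also leaves implicit; second, the bookkeeping of the delayed supremum across switching instants, where you are in fact more careful than the paper, whose argument applies \lemref{lem3} afresh on each inter-switch interval without accounting for the portion of the history segment that reaches back past $\tau_{l}$ into a different mode. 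Both routes finish identically: convert $\mathbf{E}V$ into $\mathbf{E}\|X^{n}(t)\|_{H}^{q}$ via (A1), convexity of $\alpha_{1}$ and Jensen's inequality, and then let $n\rightarrow\infty$ using the convergence of the strong solutions of (\ref{eq22}) to the mild solution of (\ref{eq1}).
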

\begin{proof}
Consider an impulsive differential system
\begin{eqnarray}\label{im1}
\left\{ \begin{array}{ll}
\dot{\xi}(t)=\gamma_{1}\xi(t)+\gamma_{2}\xi_{t},\\
\xi(\tau_{i})=\mu\xi(\tau_{i}^{-}),~i\geq1,~t\geq t_{0},\\
\xi(t_{0})=V(X_{0},\sigma(t_{0})),
\end{array} \right.
\end{eqnarray}
where the sequence of switching instants $\{\tau_{j}\}_{j\geq1}$ corresponding to the switching signal $\sigma(t)$ in (\ref{eq1}).
From condition (\ref{a}) it follows that hypothesis (i) of Theorem \ref{thm1}
is satisfied with $h^{\circ}(X_{t})=h(X_{t})=\|X_{t}(0)\|_{H}^{q}.$
Further, from (\ref{f}) and (\ref{g}) together with the initial condition in (\ref{im1}), it follows that hypothesis (iii) of Theorem \ref{thm1} is satisfied.

Let $T>0$ be arbitrary. Consider the evolution of system (\ref{im1}) from $t=t_{0}$ through $t=T.$ Let $N_{\sigma}(T,t_{0})$ switches on this interval, and let $\nu:=N_{\sigma}(T,t),$ where $N_{\sigma}(T,t)$ as defined in Definition \ref{adt}.

Since $\gamma_{1}+\gamma_{2}<0,$ from Lemma \ref{lem3}, there exists at least a positive constant $\lambda$ such that $\lambda+\gamma_{1}+\gamma_{2}e^{\lambda\tau}<0,$ and
\begin{equation}
\xi(\tau_{i+1}^{-})=\xi(\tau)e^{-\lambda(\tau_{i+1}-\tau_{i})}, \quad 0\leq i\leq \nu,
\end{equation}
and $\xi(T)=\xi(\tau_{\nu})e^{-2\lambda(T-\tau_{\nu})}.$ Combining with the reset equation of (\ref{im1}) and iterating over $i,$ it follows that
\begin{equation}\label{xi}
  \xi(T)=\xi(t_{0})\mu^{\nu}e^{-\lambda(T-t_{0})}.
\end{equation}
Using the definition of $\nu$, (\ref{xi}) leads to
\begin{equation}\label{xi1}
  \xi(T)=\xi(t_{0})\mu^{N_{0}}e^{\lambda_{0}t_{0}}e^{-(\lambda-\frac{\ln\mu}{\tau_{a}})T}.
\end{equation}
To ensure $\xi(T)\rightarrow0$ as $T\rightarrow\infty,$ it is sufficient to have $\tau_{a}>\frac{\ln\mu}{\lambda}.$ This guarantees the convergence of the impulsive differential system (\ref{im1}) to zero as time increases to infinity. Stability of (\ref{im1}) follows directly from (\ref{xi1}) and the estimate $\xi(t)\leq \xi(t_{0})\mu^{N_{0}}e^{\lambda_{0}t_{0}}$ holds if $\tau_{a}>\frac{\ln\mu}{\lambda}.$ We conclude that system (\ref{im1}) is globally asymptotically stable. Therefore, hypothesis (iv) of Theorem \ref{thm1} is also satisfied.
By Theorem \ref{thm1}, we conclude that the switched system (\ref{eq1}) is globally asymptotically stable in the $q$th mean.
\end{proof}
\section{Conclusion}
In the discussion of the stability in terms of two measures for stochastic differential functional differential, the assumption of the existence of the strong solution of the system is necessary. However, for stochastic partial differential system, this assumption is not always hold. Fortunately,
the work in \cite{1} present a method that pass on stability of strong solutions to the mild ones, and with this method, we can discuss the
the stability in terms of measures for switched stochastic partial differential delay equations with jumps. In Theorem \ref{thm1}, we presented a general frame for the stability analysis of system (\ref{eq1}). For more specific switching systems like systems under fixed-index sequence monotonicity condition and under dwell-time switching, we gave the more detail conditions that can be keep the system to be stable in terms of two measures.

\end{document}